\newtheorem{theorem}{Theorem}[section]
\newtheorem{corollary}[theorem]{Corollary}
\newtheorem{proposition}[theorem]{Proposition}
\newcommand{\Hom}{{\rm Hom}}
\newcommand{\Ss}{\mathcal{S}}
\begin{document}

\sloppy

\title[Faith's QF conjecture]{Semiprimary selfinjective algebras with at most countable dimensional Jacobson quotient are QF}

\keywords{QF rings, Faith Conjecture, selfinjective\\
2010 {\it AMS Mathematics Subject Classification } 16D50, 16L60}

\begin{abstract}
We give a positive solution to a conjecture of Faith stating that a self-injective semiprimary ring is QF, for algebras which are at most countable dimensional modulo their Jacobson radical. As a consequence of the method used, we also give short proofs of several other known positive answers to this conjecture.
\end{abstract}

\author{M.C. Iovanov}
\address{University of Southern California \\
3620 S Vermont Ave, KAP 108 \\
Los Angeles, CA 90089, USA}
\address{University of Bucharest, Facultatea de Matematica\\ Str.
Academiei 14, Bucharest 1, RO-010014, Romania}
\address{e-mail: iovanov@usc.edu, yovanov@gmail.com}

\date{}

\maketitle

\section{Introduction and Preliminaries}

In classical ring theory, among the rings of interest and intensively studied in literature are the left or right selfinjective rings. Left selfinjective rings which are also artinian form another important class of rings called quasi-Frobenius (QF) rings. There are many equivalent definitions of these rings, and they have an intrinsic symmetry: a ring is QF if it is right selfinjective and right semiartinian, or equivalently, noetherian or artinian on one side and injective on one side. Classical results also include those of Faith and Walker stating that such rings are characterized by the fact that all right (equivalently, all left) injective (equivalently, projective) modules are projective (injective). These rings are important generalizations of Frobenius algebras, retaining the categorical properties of these; examples include group algebras of finite groups, Hopf algebras, certain cohomology rings. Moreover, such rings are important in many fields of mathematics, from representation theory, category theory, homological algebra and topology to coding theory. 

Perhaps one of the most interesting questions regarding QF rings, and also in ring theory in general, is the following question, known in literature as Faith's QF conjecture: 

\vspace{.5cm}

{\bf Conjecture [Faith]}\\
\emph{A left selfinjective semiprimary ring is QF.}

\vspace{.5cm}

Much work has been dedicated to this problem over the years \cite{ANY, CH, CS, NY, NY2, NY3, O, X}; we also refer to the recent survey \cite{FH} which contains a comprehensive account of the history and known results on QF rings and related topics. 

In this note we present a positive answer for algebras $A$ over a field $K$, which are at most countable dimensional modulo their Jacobson radical, i.e. $A/Jac(A)$ is at most $\aleph_0$. This includes, for example, the important situation when $A/J$ is not only semisimple but finite dimensional. As consequence of our method, we also give short straightforward proofs of two other results of \cite{L} and \cite{Ko}, stating that Faith's Conjecture is true for countable dimensional algebras, or for rings $A$ for which $|A/J|\leq \aleph_0$ or $|A/J|<|A|$. 

For sake of completeness, we recall a few facts most of which are fairly easy to see and well known in literature. Let $A$ be a ring and $J$ its Jacobson radical. If $A$ is semilocal, i.e. $A/J$ is semisimple, then an $A$-module is semisimple if and only if it is canceled by $J$. Indeed, every simple is canceled by $J$, and if $JM=0$ then $M$ has an $A/J$-module structure which is semisimple; therefore $M$ is semisimple as the lattice of $A$-submodules and $A/J$-submodules of $M$ coincide in this case. If $A$ is semiprimary, and $n$ is such that $J^n=0\neq J^{n-1}$ then $A$ is semiartinian with a Loewy series of length $n-1$ since $J^k/J^{k+1}$ is semisimple for all $k$. Write $A\bigoplus\limits_eAe$ a sum of indecomposable $A$-modules; such a decomposition obviously exists because $A/J$ has finite length, and each $Ae$ is obtained for some indecomposable idempotent $e$. Note that if $A$ is left self-injective, then each indecomposable $Ae$ has simple socle: indeed, if we have a nontrivial decomposition of the socle $s(Ae)=M\oplus N$, then we can find $E(M),E(N)$ injective hulls of $M,N$ contained in $Ae$, and we obtain $Ae=E(M)\oplus E(N)$ a nontrivial decomposition. This is a contradiction. 

Note also that if $A$ is left self-injective semiprimary, for each simple left $A$-module, the right $A$-module $\Hom(S,A)$ is simple. First, note that it is nonzero. For this, we look at the isomorphism types of indecomposable modules $Ae$; these are projective and local, and are the cover of some simple $A$-module. They are isomorphic if and only if their respective "tops" are isomorphic. The number of isomorphism types of such modules equals the number of isomorphism types of simple modules $t$. Moreover, since the indecomposable $Ae$'s are also injective with simple socle, we see that they are isomorphic if and only if their socle is isomorphic. This shows that the distinct types of isomorphism of simples occurring as socle of some $Ae$ is also $t$, and so each simple $S$ must appear as socle of some $Ae$ (i.e. it embeds in $A$). This shows that $\Hom(S,A)\neq 0$ for each simple $A$-module $S$. If $f,g\in\Hom(S,A)$, and $f\neq 0$, then $f:S\rightarrow A$ is a mono and since ${}_AA$ is injective there is some $h:A\rightarrow A$ such that $h\circ f=g$. If $f(x)=xc,\,\forall x\in A$, we get $f(x)c=g(x)$ i.e. $f\cdot c=g$ in $\Hom(S,A)$. This shows that $\Hom(S,A)$ is generated by any $f\neq 0$, so it is simple. In particular, since each simple module embeds in $A$ which is left injective, it follows that $A$ is an injective cogenerator of the category of left $A$-modules, i.e. it is a left PF (pseudo-Frobenius) ring. It is easy to see that the same conclusions follow in case $A$ is semilocal, left semiartinian and left selfinjective.

\section{The Main Result}\label{s1}

Let $\Ss$ be a set of representatives for the simple left $A$-modules, $t=|\Ss|$ and $A/J=\bigoplus\limits_{S\in\Ss}S^{n_S}$. Let $\Sigma=s({}_AA)$ be the left socle of $A$. It is easy to see that this is an $A$-sub-bimodule of $A$. Note that since each indecomposable module $Ae$ has simple socle, we have that $length(\Sigma)$ equals the number of terms in the indecomposable decomposition $A=\bigoplus\limits_eAe$, which equals $length({}_AA/J)$ since each indecomposable $Ae$ is local. Let $\Sigma=\bigoplus\limits_{S\in\Ss}S^{p_s}$. We have $\sum\limits_{S\in\Ss}p_S=\sum\limits_{S\in\Ss}n_S$.

\begin{proposition}
Let $A$ be left self-injective and semiprimary. Then the set $\{\Hom(S,A)| S\in \Ss\}$ is a set of representatives for the simple right $A$-modules. In particular, $\Hom(S,A),\Hom(T,A)$ are non-isomorphic for non-isomorphic $S,L\in\Ss$.
\end{proposition}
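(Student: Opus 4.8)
The plan is to show that $S\mapsto\Hom(S,A)$ induces a bijection between isomorphism classes of simple left and of simple right $A$-modules. I would first record a cardinality remark: since $A$ is semiprimary, $A/J$ is semisimple artinian, hence by Wedderburn a finite product of $t$ matrix rings over division rings; and since $J$ annihilates every simple one-sided module, the simple left (resp. right) $A$-modules coincide with the simple left (resp. right) $A/J$-modules, of which there are $t$ on either side. As the preliminaries already give that each $\Hom(S,A)$ is simple, it then suffices to prove the assignment is surjective onto isomorphism classes of simple right modules: a surjection between $t$-element sets is a bijection, which is exactly the ``in particular'' clause.

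For surjectivity I would fix a decomposition ${}_AA=\bigoplus_{i=1}^{m}Ae_i$ into indecomposables, with $e_1,\dots,e_m$ orthogonal primitive idempotents summing to $1$. Each $Ae_i$ is a direct summand of ${}_AA$, hence injective, and has simple socle $s(Ae_i)$ by the preliminaries. The crucial step is to apply $\Hom(-,{}_AA)$ to the inclusion $s(Ae_i)\hookrightarrow Ae_i$: since ${}_AA$ is injective this functor is exact, so one obtains a surjection of right $A$-modules $\Hom(Ae_i,{}_AA)\twoheadrightarrow\Hom(s(Ae_i),{}_AA)$. Now $\Hom(Ae_i,{}_AA)\cong e_iA$ as right $A$-modules (via $\varphi\mapsto\varphi(e_i)$; equivalently, $\Hom(-,{}_AA)$ transforms ${}_AA=\bigoplus Ae_i$ into $A_A=\bigoplus e_iA$), and $e_iA$ is an indecomposable projective right module over the semiperfect ring $A$ (semiprimary rings are semiperfect), so it has a unique maximal submodule $e_iJ$ and unique simple quotient $e_iA/e_iJ$. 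Hence the simple right module $\Hom(s(Ae_i),{}_AA)$, being a quotient of $e_iA$, is isomorphic to the top $e_iA/e_iJ$.

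To finish the surjectivity I would note that the tops $e_iA/e_iJ$, $i=1,\dots,m$, already exhaust all isomorphism classes of simple right $A$-modules: as a right module $A/J=\bigoplus_i(e_iA/e_iJ)$ is semisimple, and every simple right $A$-module is a simple right $A/J$-module, hence a direct summand of $A/J$. Consequently every simple right $A$-module is isomorphic to $\Hom(s(Ae_i),{}_AA)$ for some $i$, and $s(Ae_i)$, being a simple left module, is isomorphic to a member of $\Ss$; this is the desired surjectivity, and with the cardinality count the proposition follows. (Injectivity can also be extracted directly: any two simple left modules are isomorphic to $s(Ae_i)$, $s(Ae_j)$, and $\Hom(s(Ae_i),A)\cong\Hom(s(Ae_j),A)$ iff $e_iA/e_iJ\cong e_jA/e_jJ$ iff $e_iA\cong e_jA$ iff $Ae_i\cong Ae_j$ iff $s(Ae_i)\cong s(Ae_j)$, the last equivalence being the correspondence between tops and socles recalled in the preliminaries.)

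I do not expect a genuine obstacle; everything turns on the single identification $\Hom(s(Ae_i),{}_AA)\cong e_iA/e_iJ$, i.e. the statement that $\Hom(-,{}_AA)$ swaps the socle of an indecomposable summand of ${}_AA$ with the top of the matching indecomposable summand of $A_A$. The only points deserving a line of justification are the exactness of $\Hom(-,{}_AA)$ (injectivity of ${}_AA$), the natural isomorphism $\Hom(Ae_i,{}_AA)\cong e_iA$, and the locality of $e_iA$; the two-sided count of simple modules and the exhaustion of the simple right modules by the $e_iA/e_iJ$ are then routine.
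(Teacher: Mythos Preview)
Your proof is correct and rests on the same idea as the paper's: dualize the inclusion of the left socle into ${}_AA$ and compare the resulting semisimple right quotient with $(A/J)_A$. The only difference is packaging: the paper works globally, applying $\Hom(-,A)$ to $\Sigma=s({}_AA)\hookrightarrow A$ and using the length count $\mathrm{length}\,\Hom(\Sigma,A)=\sum p_S=\mathrm{length}\,{}_A(A/J)=\mathrm{length}\,(A/J)_A$ to conclude $A/J\cong\Hom(\Sigma,A)$, whereas you carry out the same computation idempotent-by-idempotent, identifying $\Hom(s(Ae_i),A)$ with the top $e_iA/e_iJ$ of the matching right indecomposable. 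Your version has the mild advantage of making the correspondence $S\mapsto S^*$ explicit (and of yielding injectivity directly via $e_iA\cong e_jA\Leftrightarrow Ae_i\cong Ae_j$), but the two arguments are really the same one.
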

\begin{proof}
Since $A$ is left injective, the monomorphism $0\rightarrow \Sigma\rightarrow A$ gives rise to the epimorphism of right $A$-modules $\Hom(A,A)\rightarrow \Hom(\Sigma,A)\rightarrow 0$. Note that $\Hom(\Sigma,A)=\bigoplus\limits_{S\in\Ss}\Hom(S,A)$.
Since $\Hom(S,A)\neq 0$ for each $S\in\Ss$ we have $\Hom(\Sigma,A)=\bigoplus\limits_{S\in\Ss}\Hom(S,A)^{p_S}$ has length equal to $length(\Sigma)=\sum\limits_{S\in\Ss}{p_S}=length({}_AA/J)$. But by the classical Wedderburn-Artin theorem, $length{}_A(A/J)=length(A/J)_A$. Since $\Hom(\Sigma,A)$ is semisimple, the kernel of $A\rightarrow \Hom(\Sigma,A)$ contains $J$, and furthermore since $length(A/J)=length(\Hom(\Sigma,A))$, we obtain $A/J\cong \Hom(\Sigma,A)$ as right $A$-modules. This shows that all types of isomorphism of right $A$-modules are found among components of $\Hom(\Sigma,A)$, and so the statement is proved. 
\end{proof}

We note that the above proof further shows that there is an exact sequence of right $A$-modules 
$$0\longrightarrow J\longrightarrow A\longrightarrow \Hom(\Sigma,A)\longrightarrow 0$$
But it is immediate to see that this means that $\{a\in A|\Sigma\cdot a=0\}=J$, i.e. $ann(\Sigma_A)=J$. In particular, this shows that $\Sigma$ is also semisimple as a right $A$-module, i.e. the left socle of $A$ is contained in the right socle. In fact, it is known that the left and right socles of a left PF-ring coincide \cite[Theorem 6]{Ka}, and if $A$ is semiprimary with same left and right socle, then it is easy to show that the left and right Loewy series of $A$ coincide \cite[Proposition 2.1]{AP} (see also \cite[Lemma 3.7]{Ko}). 

For a left $A$-module $M$, let us denote for short $M^*=\Hom(M,A)$; this is a right $A$-module. 

\begin{proposition}\label{p.1}
Let $A$ be a left self-injective ring and let $M$ be a left $A$-module such that there is an exact sequence $0\rightarrow S\rightarrow M\rightarrow L^{(\alpha)}\rightarrow 0$, with $S,L$ simple modules, and assume $S=s(M)$ the socle of $M$, and $L^{(\alpha)}$ denoting the coproduct of $\alpha$ copies of $L$. Then $M^*$ is a local right module with unique maximal ideal $S^\perp=\{f\in \Hom(M,A)|f_{\vert S}\neq 0\}$ which is semisimple isomorphic to $(L^*)^\alpha$.
\end{proposition}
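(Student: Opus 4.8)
The plan is to apply the contravariant functor $\Hom(-,{}_{A}A)$ to the given short exact sequence. Since $A$ is left self-injective, ${}_{A}A$ is an injective left module, so $\Hom(-,{}_{A}A)$ is exact on left $A$-modules; applied to $0\to S\to M\to L^{(\alpha)}\to 0$ it yields an exact sequence of right $A$-modules
\[
0\longrightarrow \Hom(L^{(\alpha)},A)\longrightarrow M^{*}\stackrel{\rho}{\longrightarrow} S^{*}\longrightarrow 0,
\]
where $\rho$ is restriction along the inclusion $S\hookrightarrow M$. Thus $\Ker\rho$ is exactly the set $S^{\perp}$ of $f\in\Hom(M,A)$ vanishing on $S$, and it is identified with $\Hom(L^{(\alpha)},A)\cong\Hom(L,A)^{\alpha}=(L^{*})^{\alpha}$; moreover $M^{*}/S^{\perp}\cong S^{*}=\Hom(S,A)$, which is simple and nonzero (the argument recalled in the preliminaries uses only that $S$ is simple and ${}_{A}A$ injective), so $S^{\perp}$ is a proper right submodule of $M^{*}$.

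The key step is to show that every $f\in M^{*}\setminus S^{\perp}$ generates $M^{*}$ as a right $A$-module; this forces $S^{\perp}$ to be the unique maximal submodule. First I would note that $S=s(M)$ is essential in $M$: any nonzero submodule meeting $S$ trivially would embed into the semisimple module $M/S=L^{(\alpha)}$, hence be semisimple, hence contain a simple submodule, which must then lie in $s(M)=S$ --- a contradiction. Now if $f|_{S}\neq 0$, then $f|_{S}\colon S\to A$ is a monomorphism ($S$ being simple), so $\Ker f\cap S=0$ and therefore $\Ker f=0$ by essentiality; that is, $f\colon M\to A$ is a monomorphism. Given an arbitrary $g\in M^{*}$, injectivity of ${}_{A}A$ provides $h\in\End({}_{A}A)$ with $h\circ f=g$; writing $h$ as right multiplication by $c=h(1)$ (via $\End({}_{A}A)\cong A^{\op}$), we get $g=f\cdot c\in fA$. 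Hence $fA=M^{*}$.

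It follows that any proper right submodule of $M^{*}$ is contained in $S^{\perp}$ (otherwise it contains a generator of $M^{*}$), so $M^{*}$ is local with unique maximal submodule $\mathrm{rad}(M^{*})=S^{\perp}$, and $M^{*}/S^{\perp}\cong\Hom(S,A)$ is simple. It remains to see that $S^{\perp}$ is semisimple. Every $f\in S^{\perp}$ factors through $M/S=L^{(\alpha)}$, which is semisimple, so $f(M)$ is a semisimple submodule of ${}_{A}A$ and hence lies in the left socle $\Sigma=s({}_{A}A)$. Since $\mathrm{ann}(\Sigma_{A})=J$ (the remark following the first Proposition of this section), $\Sigma J=0$, so for all $j\in J$ and $m\in M$ one has $(fj)(m)=f(m)j=0$; thus $S^{\perp}J=0$. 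Being annihilated by $J$, $S^{\perp}$ is a module over the semisimple artinian ring $A/J$, hence semisimple; together with the isomorphism $S^{\perp}\cong(L^{*})^{\alpha}$ from the first paragraph, this completes the proof.

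The main obstacle is the step showing that $f|_{S}\neq 0$ already makes $f$ injective: this is precisely where the essentiality of the simple socle $S$ in $M$ is used, and it is what lets the injectivity of ${}_{A}A$ be applied to lift an arbitrary $g$ along $f$. The semisimplicity of $S^{\perp}$ is the only other non-formal point --- it is not a general property of radicals of local modules and genuinely depends on the identity $\Sigma J=0$ established earlier; once these two facts are in place, everything else (the exact sequence, the identification of $S^{\perp}$, and the fact that a single generator suffices) is routine.
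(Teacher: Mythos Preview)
Your proof is correct and follows essentially the same route as the paper's: dualize the short exact sequence, identify $S^\perp\cong(L^*)^\alpha$, and show that any $f\notin S^\perp$ is a monomorphism (hence generates $M^*$ by injectivity of ${}_AA$). You merely spell out in more detail two points the paper leaves implicit---the essentiality of $S$ in $M$, and the reason $S^\perp J=0$ (the paper simply asserts $(L^*)^\alpha$ is ``canceled by $J$'', relying on the simplicity of $L^*$ from the preliminaries, while you route it through $\Sigma J=0$).
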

\begin{proof}
We have an exact sequence $0\rightarrow (L^*)^\alpha\rightarrow M^*\rightarrow S^*\rightarrow 0$; it is easy to see that the kernel of the morphism $M^*=\Hom(M,A)\rightarrow S^*=\Hom(S,A)$ is $S^\perp$. Hence $S^\perp\cong (L^*)^\alpha$ which is right semisimple since it is canceled by $J$. Now since $M$ has simple socle, and its socle embeds in $A$ which is injective, it follows that $M$ embeds in $A$. We now note that $M^*$ is generated by any $f\not\in S^\perp$, which will show that $M^*$ is . Indeed, such an $f$ must be a monomorphism, and given any other $h:M\rightarrow A$, by the injectivity of ${}_AA$ there is $g\in\Hom(A,A)$ such that $g\circ f=h$. If $g(x)=xc$ for $c\in A$ then we have $h=f\cdot c$ in $M^*$. This shows that $f\cdot A=M^*$. This obviously shows that $S^\perp$ is the only maximal submodule of the cyclic right $A$-module $M^*$.
\end{proof}

Note that the fact that $M^*$ is local can also be proved by embedding $M$ in some indecomposable $Ae$ for an indecomposable idempotent $e$, and then, by applying the exact functor $\Hom(-,A)$, one obtains an epimorphism $\Hom(Ae,A)=eA\rightarrow M^*$, and so $M^*$ is local because $eA$ is.

Let $\alpha$ be the largest cardinality for which there is a left module $M$ with simple socle and such that $M/s(M)\cong L^{\alpha}$ for some simple module $L$. Such a cardinality obviously exists, since any such module is contained in $A$ because $A$ is injective. In fact, if $\Sigma_1$ is the second socle of $A$, then $\alpha\leq length(\Sigma_2/\Sigma)$. We note that if $\alpha$ is infinite, this is an equality. Indeed, if for each simple modules $S,L$ we denote by $\alpha_{S,L}=[E(S)/S):L]$ - the multiplicity of $L$ in the second socle of the injective hull $E(S)$ of $S$, then $\alpha=\max_{S,L\in\Ss}\alpha_{S,L}$. Therefore, $\alpha\leq\sum\limits_{S,L\in\Ss}\alpha_{S,L}\leq n\alpha=\alpha$ if $\alpha$ is infinite. We note also that if $\Sigma_k$ is the $k$'th socle, then $length(\Sigma_k/\Sigma_{k-1})\leq \alpha$; this follows by induction on $k$: if this is true for $k$, then there is an embedding $\Sigma_k/\Sigma_{k-1}\hookrightarrow A^{(\alpha)}$, and therefore we have $length(\Sigma_{k+1}/\Sigma_k)\leq length(\Sigma_1/\Sigma_0)^{(\alpha)}=\alpha\times \alpha=\alpha$ since $\alpha$ is an infinite cardinal. We therefore have

\begin{theorem}\label{t.1}
Let $A$ be a self-injective semiprimary algebra such that the dimension of each simple $A$-module is at most countable (equivalently, the dimension of $A/J$ is at most countable).
\end{theorem}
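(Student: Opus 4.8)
We must show that $A$ is QF. The plan is to reduce this, via the material developed above, to the assertion that the cardinal $\alpha$ is \emph{finite}, and then to force $\alpha$ finite using the countability hypothesis. For the reduction, note that $A$ is semiprimary, say $J^{n}=0$, so ${}_AA$ carries the finite Loewy filtration $0=\Sigma_{0}\subseteq\Sigma_{1}\subseteq\dots\subseteq\Sigma_{n}=A$ with semisimple layers. If $\alpha$ is finite then every layer $\Sigma_{k}/\Sigma_{k-1}$ has finite length: indeed, any module with simple socle $S$ has second Loewy layer of length at most $\sum_{L\in\Ss}\alpha_{S,L}\le t\alpha$ (with $t=|\Ss|$ finite), and from this one deduces, by taking a minimal Loewy layer of infinite length in some indecomposable injective $E(S)$ and passing to a length-$2$ subquotient (which embeds into a finite direct sum of indecomposable injectives all of whose second layers are finite), that no $E(S)$ — hence no summand $Ae$ of ${}_AA$ — has infinite length. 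So if $\alpha$ is finite, ${}_AA$ has finite length, $A$ is left artinian and left self-injective, hence QF. It thus suffices to rule out the case $\alpha$ infinite.

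So assume $\alpha$ is infinite. Since $\alpha=\max_{S,L\in\Ss}\alpha_{S,L}$ and $\Ss$ is finite, fix simple left modules $S,L$ with $[\,\Sigma_{2}(E(S))/S:L\,]=\alpha$, and let $M\subseteq\Sigma_{2}(E(S))$ be the full preimage of the $L$-homogeneous component of the semisimple module $\Sigma_{2}(E(S))/S$. Then $s(M)=s(E(S))\cong S$ (the socle of $E(S)$ is simple and essential), $M/s(M)\cong L^{(\alpha)}$, and $M$ embeds into ${}_AA$ since $E(S)$ is one of the indecomposable summands $Ae$ of ${}_AA$. Proposition~\ref{p.1} applies and yields that $M^{*}=\Hom(M,A)$ is a local right $A$-module with radical $S^{\perp}\cong(L^{*})^{\alpha}$. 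The crucial observation is that here $(L^{*})^{\alpha}$ is the direct \emph{product} of $\alpha$ copies of $L^{*}$, since $S^{\perp}\cong\Hom_{A}(L^{(\alpha)},A)=\prod_{\alpha}\Hom_{A}(L,A)$; and because $L^{*}=\Hom_{A}(L,A)$ is a nonzero (indeed simple) right $A$-module, $(L^{*})^{\alpha}$ contains a copy of $K^{\alpha}$, namely the product of any single line in $L^{*}$.

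Now the contradiction comes from comparing $K$-dimensions. On one hand, by the Erd\H{o}s--Kaplansky theorem $\dim_{K}M^{*}\ge\dim_{K}S^{\perp}\ge\dim_{K}(K^{\alpha})=|K|^{\alpha}\ge 2^{\alpha}>\alpha$. On the other hand, as noted just after Proposition~\ref{p.1}, embedding $M$ into the indecomposable $Ae$ and applying the exact functor $\Hom(-,A)$ produces an epimorphism of right modules $eA=\Hom_{A}(Ae,A)\to M^{*}$, whence $\dim_{K}M^{*}\le\dim_{K}eA\le\dim_{K}A$; and $\dim_{K}A=\sum_{k=1}^{n}\dim_{K}(\Sigma_{k}/\Sigma_{k-1})\le n\cdot(\alpha\cdot\aleph_{0})=\alpha$, because each layer $\Sigma_{k}/\Sigma_{k-1}$ is a direct sum of at most $\alpha$ simple modules (the bound preceding the statement, valid for $\alpha$ infinite), each of $K$-dimension at most $\aleph_{0}$ by hypothesis. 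Therefore $\alpha<|K|^{\alpha}\le\dim_{K}M^{*}\le\dim_{K}A\le\alpha$, which is absurd; so $\alpha$ is finite and $A$ is QF. The only genuinely delicate step is the one just used: Proposition~\ref{p.1} hands us the \emph{full} product $(L^{*})^{\alpha}$, whose $K$-dimension strictly exceeds $\alpha$ by Erd\H{o}s--Kaplansky, while the cyclic right module $M^{*}$ cannot be bigger than $A$, whose $K$-dimension the hypothesis holds down to $\alpha$. The finite-$\alpha$ bookkeeping is routine, and the same scheme — with ``$\dim_{K}\le\aleph_{0}$'' replaced by a comparison of cardinalities — reproves Faith's conjecture for rings with $|A/J|\le\aleph_{0}$ or $|A/J|<|A|$.
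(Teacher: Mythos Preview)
Your proof is correct and follows essentially the same route as the paper: assume $\alpha$ infinite, use Proposition~\ref{p.1} to produce a cyclic right module $M^{*}$ containing $(L^{*})^{\alpha}$ of $K$-dimension at least $2^{\alpha}$, and contradict the bound $\dim_{K}A\le\alpha$ coming from the Loewy layers and the countability hypothesis. You supply more detail than the paper does --- in particular the reduction ``$\alpha$ finite $\Rightarrow$ $A$ artinian'' and the explicit appeal to Erd\H{o}s--Kaplansky --- but the argument is the same.
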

\begin{proof}
With the above notations, assume $\alpha$ is infinite. The length of each $\Sigma_k/\Sigma_{k-1}$ is at most $\alpha$, so since the dimension of each simple is at most $\aleph_0$, its dimension is at most $\aleph_0\times \alpha=\alpha$ (since $\alpha$ is infinite). Thus, the dimension of $A$ is at most $\alpha$, and so it equals $\alpha$ (since $length(\Sigma_1/\Sigma_0)=\alpha$). On the other hand, by Proposition \ref{p.1}, there is a local right $A$-module $M^*$, with socle $L^\alpha$ for some simple $L$. Note that $\dim(L^\alpha)\geq 2^\alpha$, and that there is an epimorphism $A\rightarrow M^*$, so $\dim(A)\geq 2^\alpha$. This is obviously a contradiction. 
\end{proof}

We note that can also prove this by using the exact sequences $0\rightarrow \Sigma_k/\Sigma_{k-1}\rightarrow A/\Sigma_{k-1}\rightarrow A/\Sigma_k\rightarrow 0$, which by the left injectivity of $A$ yield the exact sequences of right $A$-modules $0\rightarrow (\Sigma_k/\Sigma_{k-1})^*\rightarrow (A/\Sigma_{k-1})^*\rightarrow (A/\Sigma_k)^*\rightarrow 0$ so $\dim(A/\Sigma_{k-1})^*-\dim(A/\Sigma_k)^*=\dim(\Sigma_k/\Sigma_{k-1})^*$, which, by summing for $k$ yields 
$$\dim(A)=\sum\limits_k\dim(\Sigma_k/\Sigma_{k-1})^*.$$ 
Then one can proceed as above to note that in the situation when $\alpha$ is infinite and $\dim(A/J)$ is at most countable, then one of the dimensions on the right of the above equalities is at least $2^\alpha$, while $\dim(A)=\alpha$, a contradiction.

We note several other corollaries that can be obtained applying the above method. The following can also be obtained from the results of \cite{L}, which shows that a left self-injective at most countable dimensional algebra is QF; nevertheless, the proofs of \cite{L} use some further assumptions on $A$, such that the cardinality of $A$ is regular, and also makes use of the generalized continuum hypothesis (see also MR512077, Erratum to: [\emph{A countable self-injective ring is quasi-Frobenius}, Proc. Amer. Math. Soc. 65
(1977), no. 2, 217–-220]; Proc. Amer. Math. Soc. 73 (1979), no. 1, 140).

\begin{corollary}
A semiprimary left self-injective algebra of countable dimension then is QF. 
\end{corollary}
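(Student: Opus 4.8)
The plan is to read the corollary off Theorem~\ref{t.1}. If $\dim_K A\leq\aleph_0$, then $A/J$, being a quotient vector space of $A$, also satisfies $\dim_K(A/J)\leq\aleph_0$; equivalently, since every simple left $A$-module is cyclic and killed by $J$, hence a quotient of $A/J$, every simple left $A$-module is at most countable dimensional. This is precisely the hypothesis of Theorem~\ref{t.1}, so $A$ is QF. There is essentially no obstacle beyond the elementary dimension bookkeeping just described; all the substance is in Theorem~\ref{t.1}.

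For completeness I would also record the self-contained form of the argument, in the spirit of the remark on ``applying the above method''. First one checks that the cardinal $\alpha$ (the largest $\beta$ for which some left module $M$ has simple socle and $M/s(M)\cong L^{(\beta)}$, $L$ simple) is finite. If $\alpha$ were infinite, Proposition~\ref{p.1}, applied to a module $M$ realizing $\alpha$, would produce a local right $A$-module $M^{*}$ whose socle is $\prod_{\alpha}L^{*}$ with $L^{*}\neq 0$, hence of $K$-dimension at least $2^{\alpha}\geq 2^{\aleph_0}$ (exactly as in the proof of Theorem~\ref{t.1}); but the epimorphism $A\twoheadrightarrow M^{*}$ forces $\dim_K A\geq 2^{\aleph_0}>\aleph_0$, a contradiction. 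Hence $\alpha<\aleph_0$.

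It remains to deduce from $\alpha<\aleph_0$ that ${}_AA$ has finite length, whence $A$ is left artinian and, being left self-injective, QF (as recalled in the Introduction). By Wedderburn--Artin there are only finitely many, say $t$, isomorphism types of simple left $A$-modules, and $\Sigma_1=s({}_AA)$ has finite length. For any module $N$ of Loewy length $\leq 2$ with simple socle, decomposing $N/s(N)$ into homogeneous components and applying the maximality of $\alpha$ to the preimage of each component gives $\mathrm{length}(N)\leq 1+t\alpha$. Then one runs the induction from the paragraph preceding Theorem~\ref{t.1}: if $\Sigma_k/\Sigma_{k-1}$ has finite length $m$, then it embeds, via its injective hull, into a direct sum of at most $m$ of the indecomposable injectives $Ae$, so $\Sigma_{k+1}/\Sigma_{k-1}\hookrightarrow\bigoplus\Sigma_2(Ae)$ has length $\leq m(1+t\alpha)$; hence every socle layer has finite length, and since $J^{n}=0$ for some $n$ we get $A=\Sigma_n$ of finite length.

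I do not anticipate a real difficulty. The one point needing slightly more care than in the text is that, for finite $\alpha$, the length bound on $\Sigma_k/\Sigma_{k-1}$ grows with $k$ rather than being constantly $\alpha$; but only finiteness is needed, and it follows from $t<\infty$ together with the splitting of the injective hull of a finite-length semisimple module into a finite direct sum of the $Ae$'s. All the genuine content is already contained in Proposition~\ref{p.1} and Theorem~\ref{t.1}.
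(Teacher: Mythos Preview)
Your first paragraph is exactly the paper's approach: the corollary is stated without proof as an immediate consequence of Theorem~\ref{t.1}, and your observation that $\dim_K A\le\aleph_0$ forces $\dim_K(A/J)\le\aleph_0$ is precisely the intended one-line deduction. Your second and third paragraphs are correct and in fact make explicit a step the paper glosses over---namely, that once $\alpha$ is shown to be finite, an induction on the socle layers (with the bound now growing with $k$, as you rightly note, since $\alpha\times\alpha=\alpha$ fails) gives finite length of ${}_AA$; the paper's text only establishes $\mathrm{length}(\Sigma_k/\Sigma_{k-1})\le\alpha$ under the standing assumption that $\alpha$ is infinite.
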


The following is known from \cite[Corollary 3.10]{Ko}. We also give a very short (and straightforward) proof of this using the method above. 

\begin{proposition}
(1) A left self-injective semiprimary ring $A$ with $|A/J|\leq \aleph_0$ is QF.\\
(2) A left self-injective semiprimary ring $A$ with $|A/J|<|A|$ is QF.
\end{proposition}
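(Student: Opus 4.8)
The plan is to mimic the dimension-counting argument of Theorem~\ref{t.1}, but replacing vector-space dimension over $K$ with cardinality of the underlying sets. As before, let $\alpha$ be the largest cardinality for which there is a left $A$-module $M$ with simple socle such that $M/s(M)\cong L^{\alpha}$ for some simple $L$; by Proposition~\ref{p.1} this $\alpha$ exists and is achieved, and one has $\mathrm{length}(\Sigma_k/\Sigma_{k-1})\le\alpha$ for every $k$, with the refinement (when $\alpha$ is infinite) that $\alpha=\mathrm{length}(\Sigma_1/\Sigma_0)$. Suppose for contradiction that $A$ is not QF; then, since a left self-injective semiprimary ring is QF precisely when $A/J$ is finite, we must have $\mathrm{length}(A/J)$ infinite, hence $\alpha$ is infinite (because $\alpha\ge \mathrm{length}(\Sigma_1/\Sigma_0)=\mathrm{length}({}_AA/J)$, using that each indecomposable $Ae$ has simple socle).

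For part (1), assume $|A/J|\le\aleph_0$, so in particular every simple left $A$-module $S$, being a quotient of $A/J$, satisfies $|S|\le\aleph_0$. Since $J^n=0$, $A$ is a finite successive extension of the layers $\Sigma_k/\Sigma_{k-1}$, each of which is a semisimple left module of length at most $\alpha$, hence has cardinality at most $\aleph_0\cdot\alpha=\alpha$; consequently $|A|\le\alpha$, and in fact $|A|=\alpha$ since $\Sigma_1/\Sigma_0$ already has length $\alpha$ and each of its simple summands is nontrivial. On the other hand, Proposition~\ref{p.1} produces a local right $A$-module $M^{*}$ whose socle is $L^{\alpha}$ for some simple right module $L$; since $L\ne 0$, the set $L^{\alpha}$ has cardinality at least $2^{\alpha}>\alpha$, and since there is an epimorphism $A\twoheadrightarrow M^{*}$ we get $|A|\ge|M^{*}|\ge 2^{\alpha}>\alpha=|A|$, a contradiction. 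Hence $A/J$ is finite and $A$ is QF.

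For part (2), assume $|A/J|<|A|$. Running the same argument, the layers $\Sigma_k/\Sigma_{k-1}$ are semisimple of length at most $\alpha$; each simple summand is a quotient of $A/J$ so has cardinality at most $|A/J|$, whence $|\Sigma_k/\Sigma_{k-1}|\le |A/J|\cdot\alpha$. If $A/J$ were infinite, then $\alpha\le\mathrm{length}(A/J)\le|A/J|$, so $|A/J|\cdot\alpha=|A/J|$, and taking the finite union over $k$ gives $|A|\le|A/J|$, contradicting $|A/J|<|A|$. Therefore $A/J$ is finite, so $\alpha=\mathrm{length}(A/J)$ is finite, contradicting our earlier observation that $\alpha$ must be infinite when $A$ is not QF; thus $A$ is QF. (Alternatively, and more directly: the hypothesis $|A/J|<|A|$ forces $A$ to be infinite while $A/J$ is finite, again giving $A$ QF since a left self-injective semiprimary ring with $A/J$ finite is QF.)

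The only real subtlety is the bookkeeping around the cardinal $\alpha$: one must be careful that $\alpha$ is genuinely infinite in the non-QF case — this is exactly the content of the identity $\alpha=\mathrm{length}(\Sigma_1/\Sigma_0)=\mathrm{length}({}_AA/J)$ together with the classical fact (recalled in the preliminaries) that a left self-injective semiprimary ring with finitely many isomorphism classes of simples and finite $A/J$ is already QF — and that the strict inequality $2^{\alpha}>\alpha$ for infinite $\alpha$ is what drives both contradictions. No new machinery beyond Proposition~\ref{p.1} and Cantor's theorem is needed; everything else is the same counting scheme as Theorem~\ref{t.1} with $\dim_K$ replaced by $|\cdot|$.
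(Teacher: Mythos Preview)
Your overall strategy---replacing $\dim_K$ by cardinality in the argument of Theorem~\ref{t.1}---is exactly the paper's approach, and your part (1) contradiction (once $\alpha$ is assumed infinite) is correct and matches the paper. However, the setup contains a genuine confusion that breaks part (2). You repeatedly identify $\alpha$ with $\mathrm{length}({}_A A/J)$ (and even speak of $\mathrm{length}(A/J)$ being infinite). But $A/J$ is semisimple Artinian, so $\mathrm{length}(A/J)$ is \emph{always finite}; and $\alpha$ is by definition the maximal size of $M/s(M)$ for a simple-socle $M$, i.e.\ it measures the second Loewy layer of an indecomposable injective, not the socle of $A$. The equality $\alpha=\mathrm{length}(\Sigma_1/\Sigma_0)$ in the paper refers (with its indexing) to the \emph{second} Loewy factor, which is not $\mathrm{length}(A/J)$. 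The correct reduction to ``$\alpha$ infinite'' is: if $\alpha$ is finite then the inductive bound on the Loewy factors gives $A$ finite left length, hence $A$ is left Artinian and therefore QF; no appeal to ``$A/J$ finite $\Rightarrow$ QF'' (which is not in the preliminaries and is itself a case of the conjecture) is available.

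This confusion is fatal in your part (2): the step ``$\alpha\le\mathrm{length}(A/J)\le|A/J|$'' is false, so you cannot conclude $|A|\le|A/J|$. The paper's argument for (2) does \emph{not} try to bound $|A|$ by $|A/J|$; instead it uses the $2^\alpha$ bound from Proposition~\ref{p.1} in this part as well. Setting $c=\max_S|S|\le|A/J|<|A|$, one gets $|\Sigma_k/\Sigma_{k-1}|\le c\cdot\alpha=\max(c,\alpha)$, hence $|A|\le\max(c,\alpha)$; but $c<|A|$ by hypothesis and $\alpha<2^\alpha\le|A|$ by Proposition~\ref{p.1}, so $\max(c,\alpha)<|A|$, a contradiction. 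Your ``alternative'' for (2) also fails: $|A/J|<|A|$ does not force $A/J$ to be finite, and even if it did, ``$A/J$ finite $\Rightarrow$ QF'' is not something you have proved.
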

\begin{proof}
(1) We proceed as in Theorem \ref{t.1}, and keep the notations above. The length of each $\Sigma_k/\Sigma_{k-1}$ is at most $\alpha$, and since each simple module has cardinality at most $\aleph_0$, $|\Sigma_{k}/\Sigma_{k-1}|\leq \aleph_0\times \alpha=\alpha$. As in Theorem \ref{t.1}, using Proposition \ref{p.1} we find the right module $M^*$ with socle $L^\alpha$, which has cardinality at least $2^{\alpha}$, and is a quotient of $A$. This yields a contradiction.\\
(2) Let $c$ be the largest cardinality of a simple left $A$-module; we have $c<|A|$. Again, as above, if $\alpha$ is infinite, we obtain that the cardinality of $A$ has to be at least $2^{\alpha}$. On the other hand, the cardinality of the modules $\Sigma_k/\Sigma_{k-1}$ is less than $c\times \alpha={\rm max}(c,\alpha)$. Since $\Sigma_n=A$ for some $n$, this shows that $|A|\leq \max{c,\alpha}$. But $\alpha<2^\alpha\leq |A|$ and $c<|A|$, yielding a contradiction. 
\end{proof}

We now note another interesting fact about the general situation of Faith's QF conjecture.

\begin{proposition}
Let $A$ be a left self-injective semiprimary ring. Then \\
(i) each right $A$-module $eA$ has simple socle. Consequently, the right socle of $A$ (which coincides with $\Sigma$) is finitely generated, and has the same left and right lengths.\\
(ii) $\Hom(T,A)\neq 0$ for each right simple $A$-module $T$.
\end{proposition}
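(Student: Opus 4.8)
The plan is, for each primitive (``indecomposable'') idempotent $e$, to compute the socle of the right $A$-module $eA$ through the identification $eA\cong\Hom(Ae,A)=(Ae)^{*}$ already used above, and then to deduce (ii) from (i) together with the first Proposition of Section~\ref{s1}. The first step is a general observation, valid for an arbitrary left $A$-module $M$. The socle of the right module $M^{*}$ is $\{f\in\Hom(M,A)\mid fJ=0\}$, which, since $A$ is semilocal, is exactly $\{f\mid\im f\subseteq\Sigma\}$, where $\Sigma$ is the socle of $A$ (the same on both sides, as recalled above), a sub-bimodule of $A$ which is left semisimple. As any such $f$ kills $JM$, factoring through $M/JM$ gives an isomorphism of right $A$-modules $s(M^{*})\cong\Hom(M/JM,\Sigma)$, the right structure coming from that on $\Sigma$; and the bimodule inclusion $\Sigma\hookrightarrow A$ then yields an embedding of right $A$-modules $s(M^{*})\cong\Hom(M/JM,\Sigma)\hookrightarrow\Hom(M/JM,A)=(M/JM)^{*}$.

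For (i), apply this with $M=Ae$. Since $A$ is semiperfect, $Ae$ is local, so $T:=Ae/Je$ is simple, and hence $s(eA)$ embeds in the right $A$-module $T^{*}=\Hom(T,A)$, which is simple by the preliminary discussion. Thus $s(eA)$ is either $0$ or simple, and it is nonzero because $A$ is semiprimary, so $A_A$ is semiartinian and the nonzero module $eA$ has nonzero socle (concretely $eJ^{k}\neq 0=eJ^{k+1}$ for the largest such $k$). Therefore $s(eA)\cong T^{*}=(Ae/Je)^{*}$ is simple. Taking a complete set of primitive orthogonal idempotents $e_{1},\dots,e_{m}$, so that $A=\bigoplus_{i}Ae_{i}=\bigoplus_{i}e_{i}A$ and $m=length({}_AA/J)$, the right socle of $A$ equals $\bigoplus_{i}s(e_{i}A)$, a finite direct sum of simple right modules; it is therefore finitely generated, and its right length is $m=length({}_AA/J)=length({}_A\Sigma)$, equal to its left length.

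For (ii), combine the isomorphism $s(e_{i}A)\cong(Ae_{i}/Je_{i})^{*}$ with the first Proposition of Section~\ref{s1}, which asserts that $\{S^{*}\mid S\in\Ss\}$ is a complete set of representatives for the simple right $A$-modules. As $i$ runs over $1,\dots,m$, the simple left module $Ae_{i}/Je_{i}$ runs over all of $\Ss$ (each $S$ occurring $n_{S}\ge 1$ times, since $A/J=\bigoplus_{S}S^{n_{S}}$), so $s(e_{i}A)$ runs over all of $\{S^{*}\}$, that is, over all simple right $A$-modules. Hence every simple right $A$-module $T$ is isomorphic to $s(e_{i}A)\subseteq e_{i}A\subseteq A$ for some $i$, and so $\Hom(T,A)\neq 0$.

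The step that needs the most care is the bookkeeping of sides in the general observation: one has to verify that $s(M^{*})\cong\Hom(M/JM,\Sigma)\hookrightarrow\Hom(M/JM,A)$ are morphisms of \emph{right} $A$-modules, which is precisely why one keeps track of $\Sigma$ as a sub-bimodule of $A$ and invokes the coincidence of the left and right socles of $A$. With that in place, both statements follow at once from results already established.
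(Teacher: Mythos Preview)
Your proof is correct, but for part (i) it takes a genuinely different route from the paper's. You compute $s(eA)$ directly by identifying the socle of $M^*$ with $\{f:\im f\subseteq\Sigma\}$ and then, using the coincidence of the left and right socles (Kato's theorem, cited just before the Proposition), factor through $M/JM$ to obtain an embedding $s(eA)\hookrightarrow (Ae/Je)^*$ into a simple module. The paper instead fixes the candidate simple submodule $M^\perp=\{f:f\vert_{Je}=0\}\cong(Ae/Je)^*$ from the start and shows it is \emph{essential} in $eA$ by a diagram-chase: given any $0\neq h\in eA$, one lifts along the injective ${}_AA$ to produce a nonzero element of $M^\perp\cap hA$. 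Your argument is cleaner and yields the general formula $s(M^*)\hookrightarrow (M/JM)^*$, but it consumes the Kato result $\Sigma_r\subseteq\Sigma_l$ as input (needed for $J\cdot\im f=0$); the paper's essentiality argument is more self-contained, using only that every simple left module embeds in ${}_AA$ and that ${}_AA$ is injective. In fact the two proofs are computing the same object: under the socle coincidence one checks that $\{f:\im f\subseteq\Sigma\}=M^\perp$.

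For part (ii) both arguments rest on Proposition~2.1, but the paper's is shorter: once $T\cong S^*$ for some simple left $S$, dualizing the epimorphism $A\twoheadrightarrow S$ immediately gives a monomorphism $T=S^*\hookrightarrow A$. Your route through $T\cong s(e_iA)\subseteq A$ is correct but uses (i) where the paper does not.
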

\begin{proof}
(i) We have $eA=\Hom(Ae,A)$. Let $M$ be the unique maximal submodule of $Ae$. We show that $M^\perp=\{f:Ae\rightarrow A| \,f\vert_M=0\}\subset eA$ is essential in $A$. Let $0\neq h:Ae\rightarrow A$. Then, $\ker(h)\neq Ae$, so $\ker(h)\subseteq M$, and thus we have the following commutative diagram
$$\xymatrix{
 & Ae \ar[d]_p \ar[dr]^h & \\
0 \ar[r] & \frac{Ae}{\ker(h)}\ar[d]_\pi \ar[r]^i & A \ar@{..>}[ddl]^g \\ 
& \frac{A}{M}\ar[d]_u &\\
& A &
}$$
Here, $p$ and $\pi$ are the canonical projections, $h=i\circ p$ is the canonical decomposition, and $u$ is a nonzero morphism from $A/M$ to $A$, which exists since we know all isomorphism types of simple modules embed in $A$. Since $A$ is injective, then the above diagram is completed commutatively by a $g:A\rightarrow A$, $g(x)=xc$ for $x\in A$. Let $f=u\circ\pi\circ p$; then obviously $f\neq 0$, $f\in M^\perp$ and $g\circ h=f$, i.e. $g\cdot c=f$ in $\Hom(Ae,A)$. This shows that $M^\perp \cap hA\neq 0$ whenever $h\neq 0$. This shows that $M^\perp$ is essential in $eA$. Moreover, it is easy to see that $M^\perp\cong (Ae/M)^*$ by dualizing the exact sequence $0\rightarrow M\rightarrow Ae\rightarrow Ae/M\rightarrow 0$, so $M^\perp$ is simple. Thus, $eA=\Hom(Ae,A)$ has simple (essential) socle.\\
(ii) We have already noticed that each simple right $A$-module $T$ is of the form $\Hom(S,A)$ for a simple left $A$-module. But since there is an epimorphism $A\rightarrow S$, by duality we get a monomorphism of right $A$-modules $0\rightarrow T=\Hom(S,A)\rightarrow A$.
\end{proof}

We note that a possible procedure for proving this conjecture for other cases, would be the following. For a semiprimary left self-injective ring $A$, consider the Loewy series $0\subset \Sigma_0\subset \dots \Sigma_k\dots \Sigma_{n-1}=A)$ of $A$ - this is the same to the left and to the right. The first term has the same left and right length, as shown before. If this is true for all the factors in the Loewy series, that is, if the length of $\Sigma_{k}/\Sigma_{k-1}$ would be the same as left and right modules, one could apply the above procedure of Proposition \ref{p.1} and Theorem \ref{p.1} to obtain a positive answer to Faith's Conjecture (in fact it is enough to show that $\Sigma_1/\Sigma_0$ has the same left and right length). Specifically, let $M$ be a module like in Proposition \ref{p.1} of maximal (infinite) length $\alpha$ modulo its socle; one then sees that $M^*/M^*J$ has $A$-length greater than $\alpha$ (in fact, it is semisimple of length $2^\alpha$), by regarding everything as vector spaces over some division algebra and using methods similar to those of vector spaces. This would again be a contradiction to the fact that the left and right lengths of $\Sigma_{n-1}/\Sigma_{n-2}$ coincide. On the other hand, this also shows that if a counterexample to this conjecture exists, then some $\Sigma_{k}/\Sigma_{k-1}$ would have different left and right lengths.

One way one could try to compare the left and right lengths of $\Sigma_{k}/\Sigma_{k-1}$ is to decompose it into a direct sum of bimodules $\bigoplus\limits_{S,T}M_{S,T}$, with each $M_{S,T}$ being left and right semisimple and iso-typical (isomorphic to powers of some $S$ as a left module and some $T$ as a right module). Then one essentially needs to compare left and right lengths of certain $\Delta-\Delta'$-bimodules for some division algebras $\Delta$ and $\Delta'$. 

\bigskip\bigskip\bigskip




\begin{thebibliography}{99}


\bibitem[AF]{AF}
D. Anderson, K. Fuller, \emph{Rings and Categories of Modules}, Grad. Texts in Math., Springer, Berlin-Heidelberg-New York, 1974.

\bibitem[ANY]{ANY}
P. Ara, W.K. Nicholson, M.F. Yousif, \emph{A look at the Faith-Conjecture}, Glasgow Math. J. 42 (2001), 391--404.

\bibitem[AP]{AP}
P. Ara, J.K.Park, \emph{On continuous semiprimary rings}, Comm. Algebra 19, no.7 (1991), 1945--1957.

\bibitem[CH]{CH}
J. Clark, D.V. Huynh, \emph{A note on perfect left self-injective rings}, Quart. J. Math. Oxford 45 (1994), 13--17.

\bibitem[CS]{CS}
J. Clark, D.V. Huynh, \emph{On semiartinian modules and injective modules}, Proc. Edinburgh Math. Soc. 39 (1996), 263--270.

\bibitem[F]{F}
C. Faith, \emph{Algebra II: Ring Theory}. Vol 191, Springer-Verlag, Berlin-Heidelberg-New York, 1976.

\bibitem[FH]{FH}
C. Faith, D. van Huynh, \emph{When self-injective rings are QF: a report on a problem}, J. Alg. Appl. Vol. 1, No.1 (2002), 75--105.

\bibitem[Ka]{Ka}
T. Kato, \emph{Self-injective rings}, Tohoku Math. J. 19, No. 4 (1967).

\bibitem[Ko]{Ko}
K. Koike, \emph{On self-injective semiprimary rings}, Comm. Algebra 28 (2000), 4303--4319.

\bibitem[L]{L}
J. Lawrence, \emph{A countable self-injective ring is Quasi-Frobenius}, Proc. Amer. Math. Soc. 65 (1977), 217--220.

\bibitem[Lm]{Lm}
T-Y. Lam, \emph{Lectures on modules and rings}, Graduate Texts in Mathematics No. 189, Berlin - New York, Springer-Verlag (1999).

\bibitem[NY]{NY}
W.K. Nicholson, M.F. Yousif, \emph{Quasi-Frobenius rings}, Cambridge University Press (2003).

\bibitem[NY2]{NY2}
W.K. Nicholson, M.F. Yousif, \emph{Miniinjective rings}, J. Algebra (1997), 548--578.

\bibitem[NY3]{NY3}
W.K. Nicholson, M.F. Yousif, \emph{On finitely embedded rings}, Comm. Algebra 28 (2000), 5311--5315.

\bibitem[O]{O}
B. Osofsky, \emph{A generalization of Quasi-Frobenius rings}, J. Algebra 4 (1966), 373--387.

\bibitem[X]{X}
W. Xue, \emph{A note on perfect self-injective rings}, Comm. Algebra 24 (1996), 749--755.




\end{thebibliography}
\end{document}